\DeclareMathOperator*{\conv}{conv}
\DeclareMathOperator*{\Argmax}{Arg\,max}
\newcommand{\N}{\mathbb{N}}
\newcommand{\R}{\mathbb{R}}
\newtheorem{claim}{Claim}
\newtheorem{conjecture}{Conjecture}
\title{The Demyanov-Ryabova conjecture is false}
\author{Vera Roshchina}
\thanks{School of Mathematics and Statistics, University of New South Wales; School of Science, RMIT University; CIAO, Federation University Australia; e-mail: v.roshchina@unsw.edu.au}
\begin{document}

\maketitle

\begin{abstract}
It was conjectured by Vladimir Demyanov and Julia Ryabova in 2011 that the minimal cycle in the sequence obtained via repeated application of the Demyanov converter to a finite family of polytopes is at most two. We construct a counterexample for which the minimal cycle has length 4.

\medskip

\noindent\textbf{Keywords: } Demyanov-Ryabova conjecture, exhausters, Demyanov converter.
\end{abstract}

\section{Introduction}
The Demyanov-Ryabova conjecture is a geometric problem that comes from the calculus of exhausters. A representation of a positively homogeneous function as a lower envelope of a family of sublinear functions can be transformed into an upper envelope of a family of superlinear functions via the Demyanov converter. The converter generates superdifferentials of the latter representation from the subdifferentials of the former (we refer the reader to an extensive discussion on converters in \cite{converters} and also to the original paper \cite{exhausters} for more details on exhausters).

It was conjectured in \cite{conjecture} that the repeated application of the Demyanov converter to a finite collection of convex polytopes eventually reaches a cycle of length at most 2. The goal of this note is to describe a counterexample to this conjecture.

Let $\Omega$ be a finite collection of $n$-polytopes, i.e. $\Omega = \{P_i, i\in I\}$, where $I$ is a finite index set, and $P_i\subset \R^n$ is a convex polytope for each $i\in I$. For every $g\in \R^n$ we can construct a new polytope $P_\Omega(g)$ from the collection $\Omega$,
$$
P_{\Omega}(g):= \conv_{P\in \Omega} \Argmax_{v\in P} \langle v,g\rangle,
$$
where by $\conv$ we denote the convex hull (see \cite[Part~I \S2]{rockafellar}),  $\langle a,b\rangle = a^Tb$ is the standard inner product on $\R^n$, and  
$$
 \Argmax_{v\in P} \langle v,g\rangle = \left\{ v\in P\,|\, \langle v,g\rangle = \max_{u\in P}\langle u,g\rangle   \right\}.
$$
This construction is illustrated in Figure~\ref{fig:converter}.
\begin{figure}[ht]
{\centering\includegraphics[scale = 1]{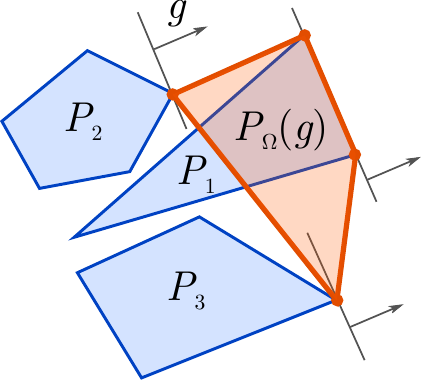}\\}
\caption{Construction of $P_\Omega(g)$ for $\Omega = \{P_1,P_2,P_3\}$.}
\label{fig:converter}
\end{figure} 
The Demyanov converter outputs the collection of all such sets for nonzero $g\in \R^n$, 
\begin{equation}\label{eq:conversion}
F(\Omega) = \{P_{\Omega}(g) \, |\, g\neq 0\}.
\end{equation}
For any finite set $\Omega$ of convex polytopes the set $F(\Omega)$ is also a finite set of convex polytopes since $F(\Omega)$ consists of convex hulls of faces of polytopes from $\Omega$, and there are finitely many of them (see \cite[Chapter~2]{ziegler} for a detailed discussion of the facial geometry of polytopes).  Hence, a repetitive application of the Demyanov converter to some initial collection $\Omega_0$ generates an infinite sequence of finite sets of polytopes $\{\Omega_k\}$,
\begin{equation}\label{eq:sequence}
\Omega_k:= F(\Omega_{k-1})\quad \forall \, k \in \N.
\end{equation}
Such sequence eventually cycles since the number of different collections of polytopes that can be constructed on the same set of vertices is finite. For any starting collection $\Omega_0$ there exist $N,l\in \N$ such that 
\begin{equation}\label{eq:cycle}
\Omega_{N+l} = \Omega_N,
\end{equation}
i.e. the sequence $\{\Omega_k\}$ has an $l$-cycle. We can choose the minimal value $L(\Omega_0) = l$ such that \eqref{eq:cycle} is true for some $N$. Vladimir Demyanov and Julia Ryabova conjectured \cite{conjecture} that this minimal length is bounded by~2. We rephrase the conjecture in our notation.

\begin{conjecture}[Demyanov and Ryabova] \label{conjecture} For any finite collection of $n$-polytopes $\Omega_0$ the minimal cycle length $L(\Omega_0)$ is at most $2$. 
\end{conjecture}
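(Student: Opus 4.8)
Since the paper's stated aim is to refute Conjecture~\ref{conjecture}, my ``proof'' of the final statement is in fact a plan to \emph{disprove} it by exhibiting a starting collection $\Omega_0$ with minimal cycle length $L(\Omega_0)=4$. The first thing I would do is replace the geometric converter by a finite, mechanically computable model. For a collection $\Omega=\{P_i\}_{i\in I}$ form the common refinement $\mathcal{N}$ of the normal fans of the $P_i$; for a relatively open cone $C\in\mathcal{N}$ and any $g\in C$ the face $\Argmax_{v\in P_i}\langle v,g\rangle$ depends only on $C$, call it $F_i(C)$, so that $P_\Omega(g)=\conv\bigcup_{i\in I}F_i(C)$ is constant on $C$ and
\[
F(\Omega)=\Bigl\{\,\conv\bigcup_{i\in I}F_i(C)\ :\ C\in\mathcal{N}\,\Bigr\}.
\]
This turns one application of $F$ into a finite enumeration over the cells of a fan, and hence makes the whole orbit $\{\Omega_k\}$ computable. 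I would also record the equivariance $F(A\Omega)=A\,F(\Omega)$ for every invertible linear $A$ (immediate from $\langle Av,g\rangle=\langle v,A^{\top}g\rangle$ together with the substitution $h=A^{\top}g$), since it lets me carry any symmetry of $\Omega_0$ through the entire sequence and thereby cut the bookkeeping.

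The next step is to understand why the conjecture is plausible, so as to see where a long cycle can hide. The intuition behind the bound $2$ is that $F$ behaves like an order-reversing map on a finite poset of collections, whose square is then order-preserving and must stabilize; order-reversing maps with this structure land on $2$-cycles. I would therefore look for configurations in which this lattice structure degenerates. Concretely, I would work in low dimension with a small number of elementary building blocks — segments are the most convenient, since the face $\Argmax_{v\in[a,b]}\langle v,g\rangle$ is just the switch ``$a$, $b$, or all of $[a,b]$'' governed by the single hyperplane $g\perp(b-a)$, so the fan $\mathcal{N}$ is a hyperplane arrangement and $P_\Omega(g)$ is the convex hull of a selection of endpoints. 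Within this model I would search for a $4$-cycle guided by $\mathbb{Z}/4$ rotational symmetry: if one can tune the endpoints so that the selections produced by $F$ reproduce the previous collection rotated by a quarter turn, i.e. $F(\Omega)=R\Omega$ for the quarter-turn $R$, then the equivariance $F(R\Omega)=R\,F(\Omega)$ forces $\Omega_k=R^k\Omega_0$, so the orbit cycles with period dividing $4$, and the period is exactly $4$ as soon as $R^2\Omega\neq\Omega$.

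With a candidate $\Omega_0$ in hand, the remaining task is verification, which I expect to be the main obstacle. I would compute $\Omega_0,\Omega_1,\Omega_2,\dots$ explicitly via the cell enumeration above until \eqref{eq:cycle} is witnessed, and confirm $\Omega_{N+4}=\Omega_N$ at the entry point $N$. The delicate part is proving that the \emph{minimal} period equals $4$ and not a proper divisor: in the symmetric construction the period automatically divides $4$, so it suffices to certify $\Omega_{N+2}\neq\Omega_N$ (which also forces $\Omega_{N+1}\neq\Omega_N$), thereby ruling out the $1$- and $2$-cycles that Conjecture~\ref{conjecture} predicts. Carrying this out means listing the output polytopes $\conv\bigcup_{i\in I}F_i(C)$ over every cone $C$ of each successive arrangement and comparing the resulting finite families as sets. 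It is this error-prone bookkeeping of many argmax faces and convex hulls — rather than any single hard inequality — where the real effort, and the main risk of mistakes, is concentrated.
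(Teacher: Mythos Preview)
Your general methodology --- reduce $F$ to a finite enumeration over the cells of the common normal fan, exploit linear equivariance to cut bookkeeping, then verify $\Omega_{N+4}=\Omega_N$ and $\Omega_{N+2}\neq\Omega_N$ by explicit case analysis --- is exactly what the paper does (with a reflection symmetry rather than a rotation, and with the cycle entered only after one transient step: $\Omega_5=\Omega_1$ but $\Omega_3\neq\Omega_1$). On that level your plan is sound and matches the paper.

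Where your proposal diverges, and where I see a real risk, is the search heuristic. You aim for a collection of \emph{segments} with $F(\Omega)=R\Omega$ for a quarter-turn $R$, so that the orbit is literally $R^k\Omega$. But if $\Omega$ consists of $m$ segments, then for a generic cell $C$ of the arrangement each $F_i(C)$ is a single endpoint and $P_\Omega(g)$ is the convex hull of $m$ points, typically a genuine polygon; thus $F(\Omega)$ will not consist of segments, and the intertwining relation $F(\Omega)=R\Omega$ forces either $m\le 2$ (too small to be interesting) or heavy collinearity constraints you have not addressed. The paper's actual counterexample does \emph{not} fit your template: it uses three triangles and one segment, has only a reflection symmetry (so the successive $\Omega_k$ are not rotates of one another), and has a transient step before the $4$-cycle begins. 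So while your verification scheme is correct and identical to the paper's, your proposed mechanism for \emph{finding} a candidate is speculative and, as stated, likely too rigid; the paper succeeds by abandoning that rigidity and simply exhibiting a hand-built $\Omega_0$ whose orbit is then checked cell by cell.
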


The conjecture is known to be true in some special cases. In particular, Tian Sang \cite{tian} showed that $L(\Omega_0)\leq 2$ when the extreme points of the polytopes in $\Omega_0$ are affinely independent, while Aris Daniilidis and Colin Petitjean \cite{aris} confirmed the conjecture under the condition that $\Omega_0$ contains certain sets and the vertices of polytopes in $\Omega_0$ are the vertices of $\conv \Omega_0$. We disprove the conjecture by providing an explicit counterexample in $\R^2$. We note that our counterexample does not satisfy the conditions of neither \cite{aris} nor \cite{tian}.

\section{Counterexample}

Our counterexample $\Omega_0$ is the collection of four polytopes in $\R^2$ shown in Figure~\ref{fig:Omega-naught}.
\begin{figure}[ht]
{\centering\includegraphics[scale = 1]{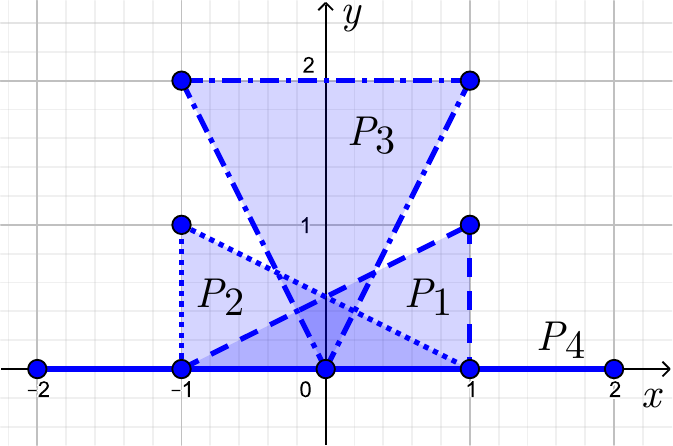}\\}
\caption{The counterexample $\Omega_0 = \{P_1,P_2,P_3,P_4\}$.}
\label{fig:Omega-naught}
\end{figure}
In the next claim we provide an explicit algebraic description of $\Omega_0$ and demonstrate that $L(\Omega_0) = 4$.

\begin{claim} Let $\Omega_0 = \{P_1,P_2,P_3,P_4\}$ be the collection of two-dimensional convex polygons,
\begin{align}\label{eq:OmegaC}
P_1 & =  \conv\left\{(1,0),(1,1),(-1,0)\right\},\notag \\
P_2 & =  \conv\left\{(-1,0),(-1,1),(1,0)\right\},\notag\\ 
P_3 & =  \conv\left\{(1,2),(-1,2),(0,0)\right\}, \\ 
P_4 & =  \conv\left\{(2,0),(-2,0)\right\},\notag
\end{align}
and let $\Omega_1, \dots \Omega_5$ be the first five members of  the sequence generated from $\Omega_0$ by the conversion operator \eqref{eq:conversion} as in \eqref{eq:sequence}. Then 
\begin{equation}\label{eq:contradiction}
\Omega_5 = \Omega_1, \text{ but } \;\Omega_5\neq \Omega_3.
\end{equation}
\end{claim}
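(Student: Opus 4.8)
The statement is entirely computational: I need to compute five iterates of the Demyanov converter starting from $\Omega_0$ and verify the two claimed relations. So the proof is a careful bookkeeping exercise rather than a conceptual argument. Let me think about how to organize it to keep the computation tractable.

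The converter $F(\Omega)$ outputs $\{P_\Omega(g) : g \neq 0\}$. For each direction $g$, $P_\Omega(g)$ is the convex hull of the maximizing faces of each $P \in \Omega$. In $\mathbb{R}^2$, the maximizing face of a polygon $P$ in direction $g$ is either a single vertex (generic $g$) or an edge (when $g$ is normal to that edge). So as $g$ rotates through the circle, each polytope's argmax face changes only at finitely many "breakpoints" — the outer normals of its edges. The collection of all these breakpoints, over all $P \in \Omega$, partitions the circle of directions into finitely many arcs; on the interior of each arc every argmax is a single vertex, and $P_\Omega(g)$ is constant (the convex hull of one vertex from each polytope). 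At the breakpoints themselves, some argmax becomes an edge, giving additional (typically larger) polytopes.

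So the key insight to make this feasible is that $F(\Omega)$ is a finite set obtained by: (i) listing all edge-normal directions of all polytopes in $\Omega$; (ii) on each open arc between consecutive breakpoints, taking the convex hull of the selected vertices; (iii) at each breakpoint direction, taking the convex hull of the selected faces (some of which are edges). This reduces each application of $F$ to a finite case analysis over $O(|\Omega| \cdot \text{(edges)})$ directions. Since the polytopes live in $\mathbb{R}^2$ with small integer coordinates, the normals are easy to enumerate.

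**The main steps**

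$$
\text{Step 1: Compute } \Omega_1 = F(\Omega_0).
$$

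I would set up a table of the outer-normal directions of all edges of $P_1, P_2, P_3, P_4$, sort them around the circle, and on each arc record the convex hull of the four selected vertices, then handle the breakpoint directions separately. This produces the finite family $\Omega_1$. I would then repeat to get $\Omega_2, \Omega_3, \Omega_4, \Omega_5$, each time recomputing the edge-normal breakpoints of the current family (the families grow and shift, so the breakpoint set changes at each stage). Finally I would compare the explicit lists: verify $\Omega_5 = \Omega_1$ as sets of polytopes, and exhibit at least one polytope belonging to exactly one of $\Omega_3, \Omega_5$ to certify $\Omega_5 \neq \Omega_3$. Establishing $\Omega_5 \neq \Omega_3$ is what forces $L(\Omega_0) = 4$ rather than $2$, so this inequality is the crux of the counterexample and deserves an explicitly displayed witness polytope.

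**The main obstacle**

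The hard part will be the sheer volume and error-proneness of the computation: four applications of $F$, with the families growing at intermediate stages, each requiring a correct enumeration of breakpoint directions and correct convex-hull computations at both generic and degenerate directions. The subtle risk is double-counting or missing polytopes — in particular, two different arcs or breakpoints can produce the same polytope, and a polytope in $\Omega_k$ might coincide with one in $\Omega_{k-2}$ by accident, so set-equality must be checked with care rather than by counting. I expect that the cleanest exposition tabulates, for each stage, the breakpoint directions together with the resulting polytopes, and then reads off the set equalities directly from those tables; the geometric symmetry of $\Omega_0$ under $(x,y) \mapsto (-x,y)$ should be exploited to roughly halve the casework at every stage.
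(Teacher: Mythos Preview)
Your plan is correct and matches the paper's own proof essentially step for step: the paper also computes $\Omega_1,\dots,\Omega_5$ by a direct case analysis of $P_{\Omega_k}(g)$ over the breakpoint directions, exploits the reflection symmetry $(x,y)\mapsto(-x,y)$ to restrict to $g_x\ge 0$, tabulates the resulting polytopes, and then reads off $\Omega_5=\Omega_1$ while exhibiting the extra vertex $(-1,2)$ in the $\Omega_3$ polytopes for $g_x>0,\ g_y\ge 2g_x$ as the witness for $\Omega_5\neq\Omega_3$. There is nothing to add beyond carrying out the bookkeeping you describe.
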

\begin{proof} The claim can be proved by constructing $\Omega_1$, \dots, $\Omega_5$ explicitly and demonstrating that the relations \eqref{eq:contradiction} are true.

Since the family $\Omega_0$ is symmetric with respect to the vertical axis, it is sufficient to compute $P_{\Omega_0}(g)$ for the right half-plane only $\{g\,|\, g_x\geq 0, g\neq 0\}$. The remaining sets can be obtained by reflecting these polytopes through the $y$-axis.

We have  
$$
\Argmax_{v\in P_1} \langle v,g\rangle =
\begin{cases}
\conv\{(-1,0),(1,0)\}, & g_x = 0, g_y<0,\\
\{(1,0)\}, & g_x>0,g_y<0,\\
\conv \{(1,0),(1,1)\}, & g_x > 0, g_y=0,\\
\{(1,1)\}, & g_y > 0, g_x \geq 0.
\end{cases} 
$$
$$
\Argmax_{v\in P_2} \langle v,g\rangle =
\begin{cases}
\conv \{(-1,0),(1,0)\}, & g_x = 0, g_y<0,\\
\{(1,0)\}, & g_x>0,g_y< 2 g_x,\\
\conv \{(1,0),(-1,1)\}, & g_x > 0, g_y=2 g_x,\\
\{(-1,1)\}, & g_x \geq 0, g_y > 2g_x.
\end{cases} 
$$
$$
\Argmax_{v\in P_3} \langle v,g\rangle =
\begin{cases}
\{(0,0)\}, & g_x \geq  0, g_y<-\frac{1}{2} g_x,\\
\conv \{(0,0),(1,2)\}, & g_x> 0,g_y  = -\frac{1}{2} g_x,\\
\{(1,2)\}, & g_x > 0, g_y > -\frac{1}{2}g_x\\
\conv \{(-1,2),(1,2)\}, & g_y > 0, g_x = 0.
\end{cases} 
$$
$$
\Argmax_{v\in P_4} \langle v,g\rangle =
\begin{cases}
\conv \{(-2,0), (2,0)\}, & g_x =   0, g_y<0,\\
\{(2,0)\}, & g_x> 0\\
\conv \{(-2,0), (2,0)\}, & g_x =   0, g_y>0.
\end{cases} 
$$
Aligning the cases for each of the computed faces, we obtain
$$
P_{\Omega_0}(g) = \begin{cases}
\conv \{(-2,0),(2,0)\} & g_x = 0, g_y<0,\\
\conv \{ (0,0),(2,0)\}& g_x > 0, g_y< -\frac{1}{2}g_x,\\
\conv \{(0,0),(1,2),(2,0) \}& g_x > 0, g_y= -\frac{1}{2}g_x,\\
\conv \{(1,0),(1,2),(2,0) \} & g_x > 0,  -\frac{1}{2}g_x<g_y< 2 g_x,\\
\conv \{(1,0),(-1,1),(1,2),(2,0)\} & g_x>0,g_y= 2 g_x,\\
\conv \{(-1,1),(1,2),(2,0)\} & g_x>0,2 g_x<g_y,\\
\conv \{(-1,2),(1,2),(2,0),(-2,0)\}  & g_x = 0, 0<g_y.
\end{cases}
$$
Reflecting these sets via the $y$-axis we obtain the full collection of sets in $\Omega_1$ shown in Figure~\ref{fig:Omega-one}.
\begin{figure}[ht]
{\centering\includegraphics[width=0.9\textwidth]{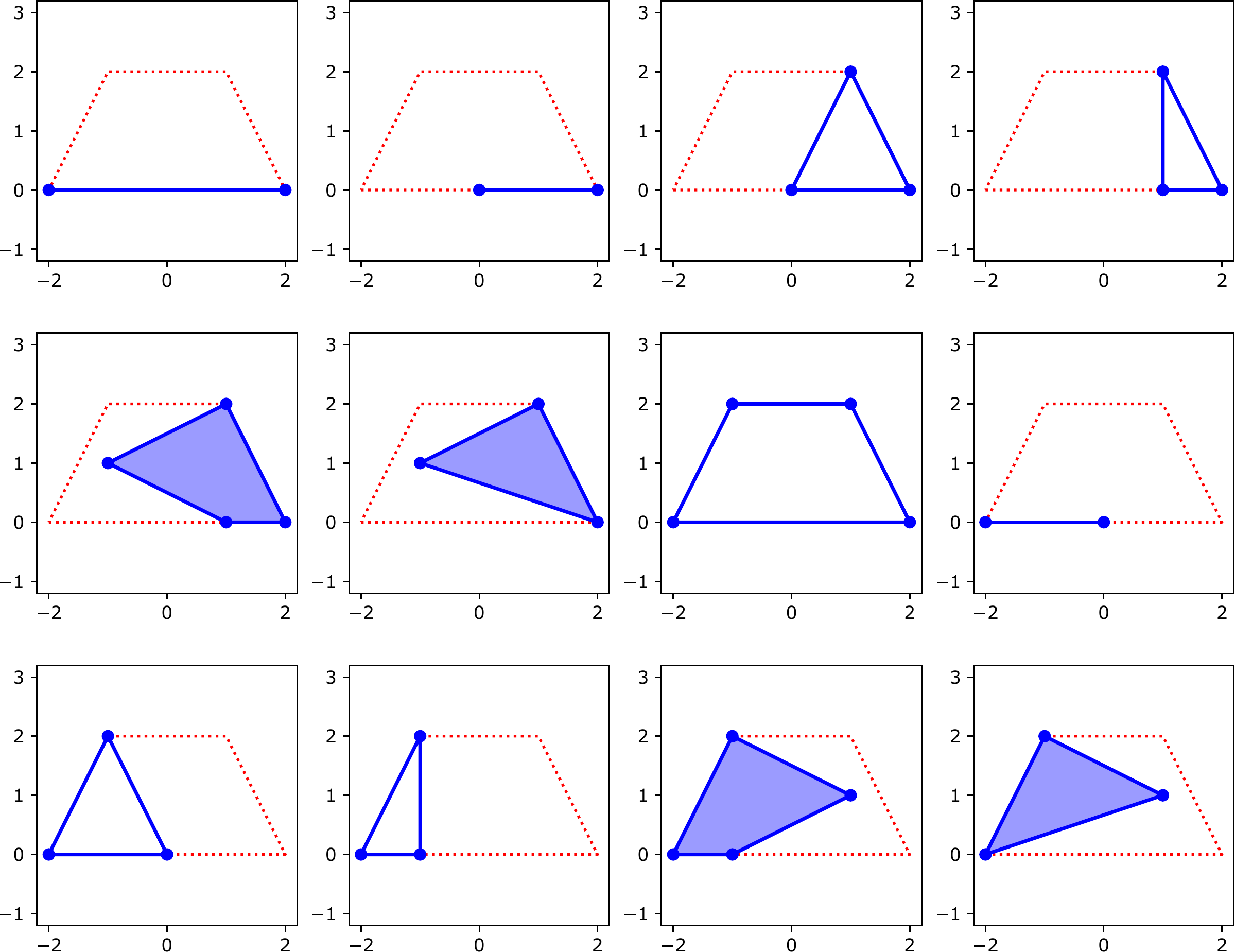}\\}
\caption{The set $\Omega_1=\Omega_5$.}
\label{fig:Omega-one}
\end{figure}
It is a technical exercise to perform the conversion four more times obtaining the collections shown in Figs~\ref{fig:Omega-two}-\ref{fig:Omega-four}, as well as the collection $\Omega_5$ that coincides with $\Omega_1$. Explicitly, we have the following sets for $g_x\geq 0$ (recall that the configuration is symmetric, so the remaining sets can be obtained as the reflections through the $y$-axis). In our notation $P_{\Omega_i}(g)$ corresponds to the sets in $\Omega_{i+1}$.
\begin{figure}[ht]
{\centering\includegraphics[width=0.9\textwidth]{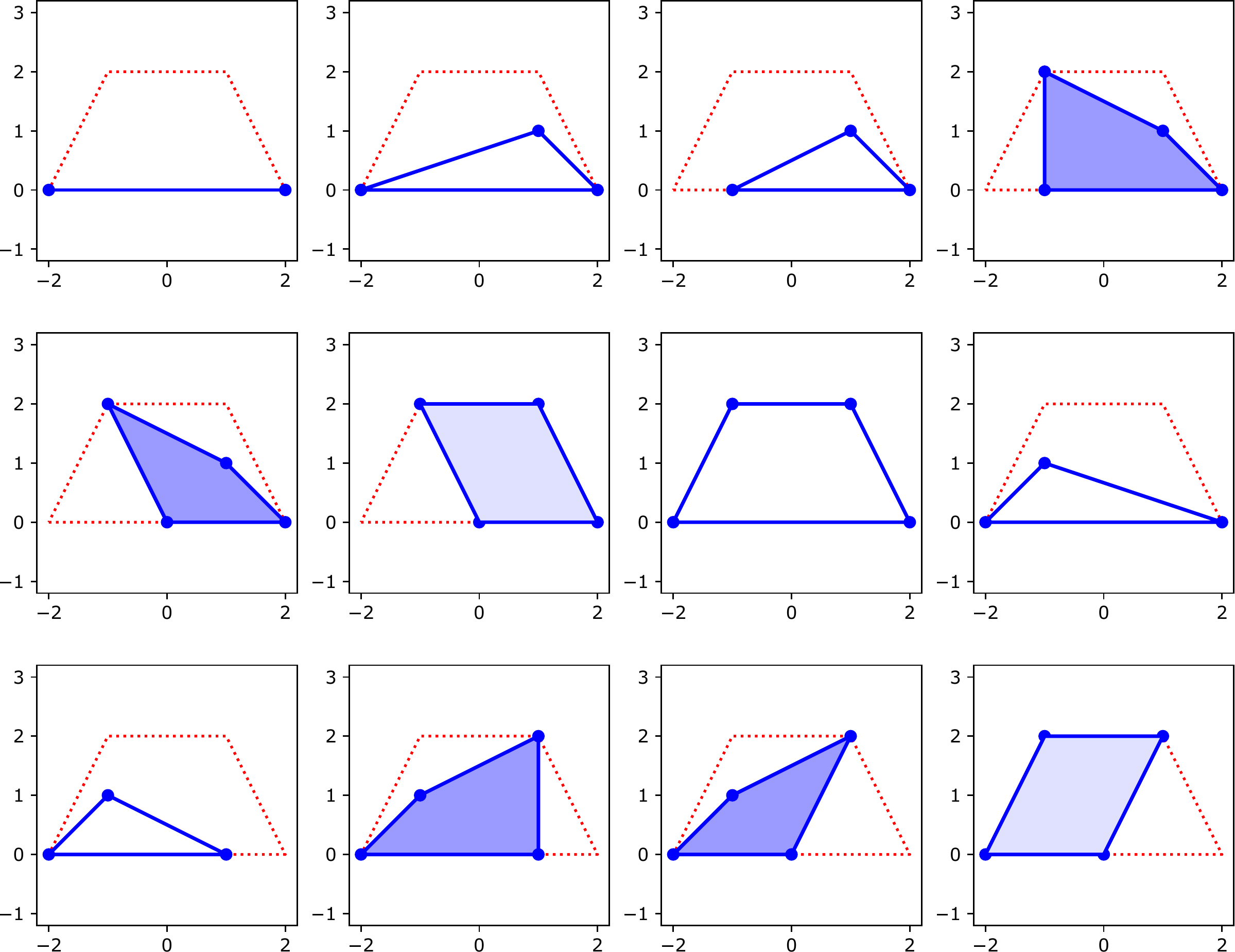}\\}
\caption{The set $\Omega_2$.}
\label{fig:Omega-two}
\end{figure}
\begin{figure}[ht]
{\centering\includegraphics[width=0.9\textwidth]{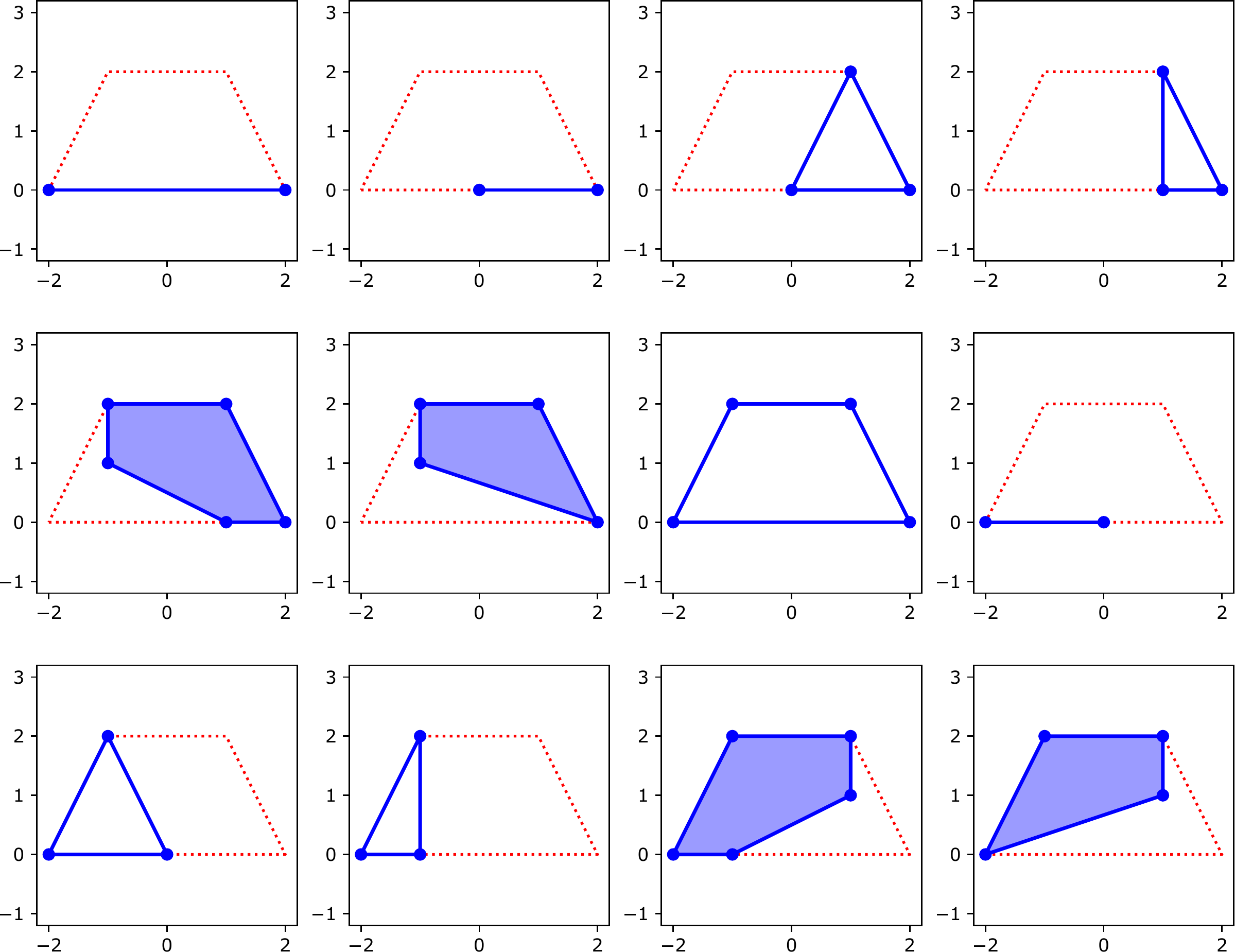}\\}
\caption{The set $\Omega_3$.}
\label{fig:Omega-three}
\end{figure}
\begin{figure}[ht]
{\centering\includegraphics[width=0.9\textwidth]{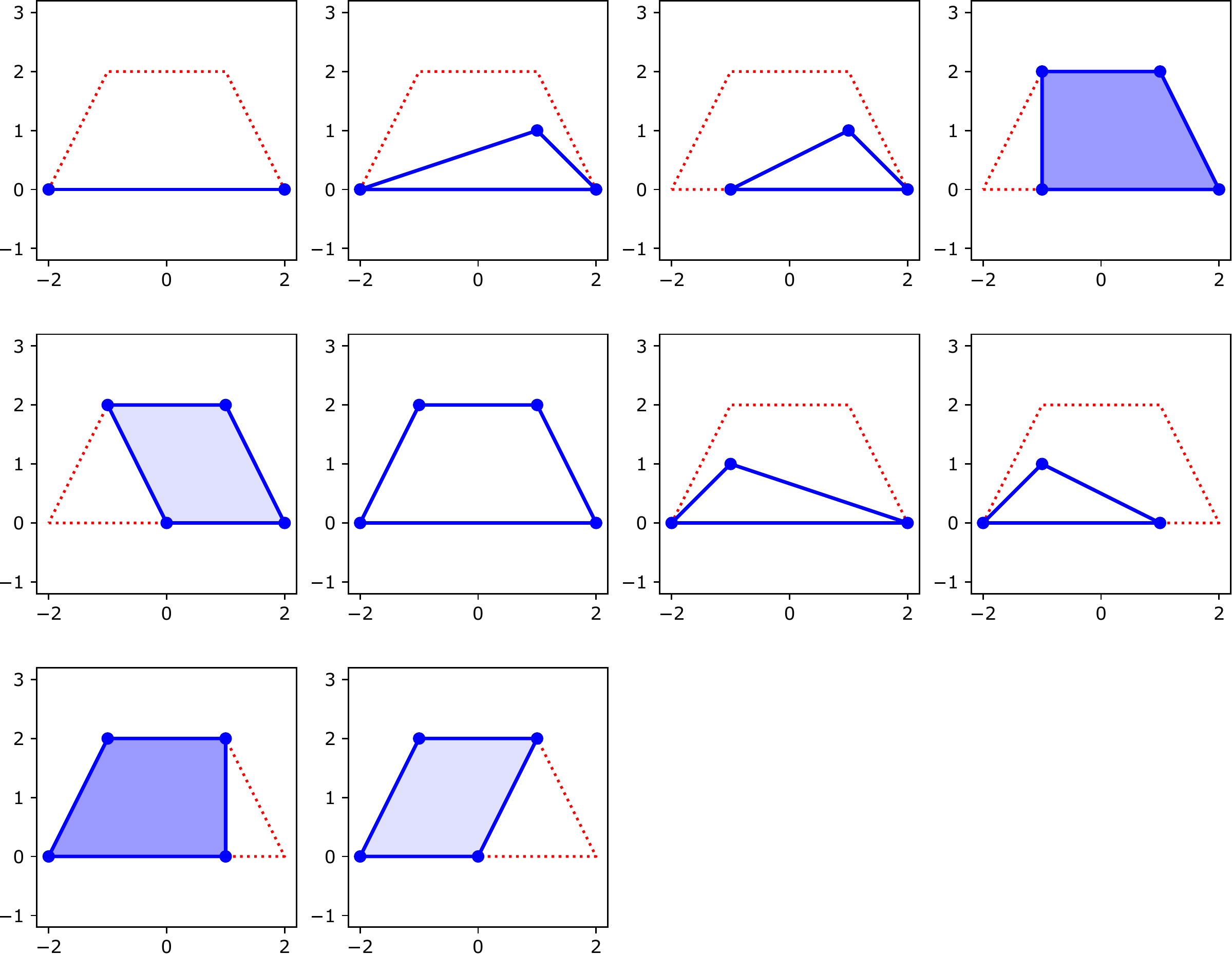}\\}
\caption{The set $\Omega_4$.}
\label{fig:Omega-four}
\end{figure}
$$
P_{\Omega_1}(g) = \begin{cases}
\conv \{(-2,0),(2,0)\} & g_x\geq 0, g_y<-3 g_x,\\
\conv \{ (-2,0),(2,0),(1,1)\}& g_x > 0, g_y = -3 g_x,\\
\conv \{(-1,0),(1,1),(2,0) \}& g_x > 0, -3 g_x<g_y<0,\\
\conv \{(-1,0),(-1,2),(2,0),(1,1) \} & g_x > 0,  g_y=0,\\
\conv \{(0,0),(-1,2),(2,0),(1,1)\} & g_x>0,0<g_y<\frac{1}{2} g_x,\\
\conv \{(0,0),(-1,2),(2,0),(1,2)\} & g_x>0,\frac{1}{2} g_x\leq g_y,\\
\conv \{(-1,2),(1,2),(2,0),(-2,0)\}  & g_x = 0, 0<g_y.
\end{cases}
$$
$$
P_{\Omega_2}(g) = \begin{cases}
\conv \{(-2,0),(2,0)\} & g_x = 0, g_y<0,\\
\conv \{ (0,0),(2,0)\}& g_x > 0, g_y< -\frac{1}{2}g_x,\\
\conv \{(0,0),(1,2),(2,0) \}& g_x > 0, g_y= -\frac{1}{2}g_x,\\
\conv \{(1,0),(1,2),(2,0) \} & g_x > 0,  -\frac{1}{2}g_x<g_y< 2 g_x,\\
\conv \{(1,0),(-1,1),(1,2),(2,0),(-1,2)\} & g_x>0,g_y= 2 g_x,\\
\conv \{(-1,1),(1,2),(2,0),(-1,2)\} & g_x>0,2 g_x<g_y,\\
\conv \{(-1,2),(1,2),(2,0),(-2,0)\}  & g_x = 0, 0<g_y.
\end{cases}
$$
$$
P_{\Omega_3}(g) = \begin{cases}
\conv \{(-2,0),(2,0)\} & g_x = 0, g_y< -3 g_x,\\
\conv \{(-2,0),(2,0),(1,1)\} & g_x > 0, g_y= -3 g_x,\\
\conv \{(-1,0),(1,1),(2,0) \}& g_x > 0, -3 g_x<g_y<0,\\
\conv \{(-1,0),(-1,2),(2,0),(1,2) \} & g_x > 0,  g_y=0,\\
\conv \{(0,0),(-1,2),(2,0),(1,2)\} & g_x>0,g_y>0,\\
\conv \{(-1,2),(1,2),(2,0),(-2,0)\}  & g_x = 0, g_y>0.
\end{cases}
$$
The sets $\Omega_5$ and $\Omega_1$ coincide, while $\Omega_1$ and $\Omega_3$ are different. In particular, for $\{g\,|\,g_x>0, g_y\geq 2 g_x\}$ the sets $\Omega_3$ contain an additional point $(-1,2)$ as compared to $\Omega_1$ and $\Omega_5$ (see the shaded sets in Figs.~\ref{fig:Omega-one} and~\ref{fig:Omega-three}). We have hence shown \eqref{eq:contradiction}.
\end{proof}

\section{Acknowledgements}

The author is grateful to the Australian Research Council for continuing financial support via projects  DE150100240 and DP180100602, also to the MATRIX research institute for organising the recent program in algebraic geometry, approximation and optimisation, which provided a fertile research environment that helped this discovery.

\bibliographystyle{plain}
\bibliography{refs}

\end{document}